\newcommand{\sysn}{\left\{\begin{array}{rcl}}
\newcommand{\sysk}{\end{array}\right.}
\newtheorem{theorem}{Theorem}[section]
\newtheorem{lemma}[theorem]{Lemma}
\theoremstyle{example}
\theoremstyle{definition}
\newtheorem{definition}[theorem]{Definition}
\newtheorem{problem}[theorem]{Problem}
\journal{...}
\begin{document}

\title{Covering games using semi-open sets}

\author[affil1]{Manoj Bhardwaj}

\address[affil1]{Department of Mathematics, University of Delhi, New Delhi-110007, India}

\ead[affil1]{manojmnj27@gmail.com}

\author[affil2]{Alexander V. Osipov}

\address[affil2]{Krasovskii Institute of Mathematics and Mechanics, \\ Ural Federal
 University, Yekaterinburg, Russia}

\ead[affil2]{OAB@list.ru}

\begin{abstract}

In this paper, we prove the following Theorems
\begin{enumerate}
\item An extremally disconnected space $X$ has the semi-Menger
property if and only if One does not have a winning strategy in
the game $G_{fin}(s\mathcal{O},s\mathcal{O})$. \item An extremally
disconnected space $X$ has the semi-Rothberger property if and
only if One does not have a winning strategy in the game
$G_1(s\mathcal{O},s\mathcal{O})$.
\end{enumerate}
These results answer the Problem 3.7 of \cite{H20} and Problem 3.9 of \cite{H30}.

\end{abstract}
%\tnotetext[label1]{The author acknowledges the fellowship grant of University Grant Commission, India.}

\begin{keyword}

 The Semi-Menger game \sep The Semi-Rothberger game \sep selection principles

\MSC[2010] 54D20 \sep 54B20

\end{keyword}

\maketitle %%
%% Start line numbering here if you want
%%
% \linenumbers

%% main text

\section{Introduction}\label{sec1}

The study of topological properties via various changes is not a new idea in topological spaces. The study of selection principles in topology and their relations to game theory and Ramsey theory was started by Scheepers \cite{H1} (see also \cite{H2}). In the last two decades it has gained the enough importance to become one of the most active areas of set theoretic topology.

In 1924, Menger \cite{H4} (see also \cite{H5,H1}) introduced
Menger property in topological spaces and studied it. This
property is stronger than Lindel\"{o}fness and weaker than
$\sigma$-compactness.

Topological games form a major tool in the study of topological properties and their relations to Ramsey theory, forcing, function spaces, and other related topics. At the heart of the theory of selection principles, covering properties are defined by the ability to diagonalize, in canonical ways, sequences of open covers. Each of these covering properties has an associated two-player game. Often the nonexistence of a winning strategy for the first player in the associated game is equivalent to the original property, and this forms a strong tool for establishing results concerning the original property. We present here conceptual proofs of these type of theorems.

This paper is organized as follows. In section-2, the definitions of the terms used in this paper are provided. Section-3 deals with study of the semi-Menger game and the semi-Rothberger game.

\section{Preliminaries}\label{sec2}

Let $(X,\tau)$ or $X$ be a topological space. We will denote by $Cl(A)$ and $Int(A)$ the closure of $A$ and the interior of $A$, for a subset $A$ of $X$, respectively. Throughout this paper, $X$ stands for topological space and the cardinality of a set $A$ is denoted by $|A|$. Let $\omega$ be the first infinite cardinal and $\omega_1$ the first uncountable cardinal. The basic definitions are given.

Let $\mathcal{A}$ and $\mathcal{B}$ be collections of open covers of a topological space $X$.

The symbol $S_1(\mathcal{A}, \mathcal{B})$ denotes the selection hypothesis that for each sequence $<\mathcal{U}_n : n \in \omega>$ of elements of $\mathcal{A}$ there exists a sequence $<U_n : n \in \omega>$ such that for each $n$, $U_n \in \mathcal{U}_n$ and $\{U_n : n \in \omega\} \in \mathcal{B}$ \cite{H1}.

The symbol $S_{fin}(\mathcal{A}, \mathcal{B})$ denotes the selection hypothesis that for each sequence $<\mathcal{U}_n : n \in \omega>$ of elements of $\mathcal{A}$ there exists a sequence $<\mathcal{V}_n : n \in \omega>$ such that for each $n$, $\mathcal{V}_n$ is a finite subset of $\mathcal{U}_n$ and $\bigcup_{n \in \omega} \mathcal{V}_n$ is an element of $\mathcal{B}$ \cite{H1}.

A subset $A$ of a topological space $X$ is said to be semi-open \cite{L1} if $A\subseteq Cl(Int(A)))$.

In this paper $\mathcal{A}$ and $\mathcal{B}$ will be the collections of the following open covers of a space $X$:

$\mathcal{O}$ : the collection of all open covers of $X$,

$s\mathcal{O}$ : the collection of all semi-open covers of $X$.

\begin{definition} \label{2.1} \cite{H4}
A space $X$ is said to have \textit{Menger property} if $X$
satisfies $S_{fin}(\mathcal{O}, \mathcal{O})$.
\end{definition}

A space $X$ is said to have \textit{semi-Rothberger property}
\cite{H20} if $X$ satisfies $S_1(s\mathcal{O}, s\mathcal{O})$.

In \cite{H20}, the authors asked the following problem as Problem 3.7.

\begin{problem}
Can semi-Rothbergerness be characterized game-theoretically or Ramsey-theoretically ?
\end{problem}

\begin{definition} \cite{H30}
A space $X$ is said to have \textit{semi-Menger property} if for
each sequence $<\mathcal{U}_n : n \in \omega>$ of semi-open covers
of $X$ there is a sequence $<\mathcal{V}_n : n \in \omega>$ such
that for each $n$, $\mathcal{V}_n$ is a finite subset of
$\mathcal{U}_n$ and each $x \in X$ belongs to $\bigcup
\mathcal{V}_n$ for some $n$, i.e., $X$ satisfies
$S_{fin}(s\mathcal{O}, s\mathcal{O})$.
\end{definition}

In \cite{H30}, the authors asked the following problem as Problem 3.9.

\begin{problem}
Can semi-Mengerness be characterized game-theoretically or Ramsey-theoretically ?
\end{problem}

\section{The semi-Menger Game}

The semi-Menger game $G_{fin}(s\mathcal{O}, s\mathcal{O})$ is a game for two players, Alice and Bob, with an inning per each natural number $n$. In each inning, Alice picks a semi-open cover of the space and Bob selects finitely many members from this cover. Bob wins if the sets he selected throughout the game cover the space. If this is not the case, Alice wins.

If Alice does not have a winning strategy in the game $G_{fin}(s\mathcal{O}, s\mathcal{O})$, then $S_{fin}(s\mathcal{O}, s\mathcal{O})$ holds.
The converse implication is a deep investigation. We prove it with simplification that makes calculations easier, and an appropriate notion that goes through induction, and thus eliminates the necessity to track the history of the game.

\begin{definition}
A countable cover $\mathcal{U}$ of a space $X$ is a tail cover if the set of intersections of
cofinite subsets of $\mathcal{U}$ is an open cover of $X$.
Equivalently, a cover $\{U_1, U_2,...\}$ is a tail cover if the family
\begin{center}
$\{\bigcap_{1 \leq n \leq \infty} U_n, \bigcap_{2 \leq n \leq \infty} U_n,...\}$
\end{center}
of intersections of cofinal segments of the cover is an open cover.
\end{definition}

A tail cover is said to be s-tail cover or tail semi cover if the elements of the cover are semi-open sets.

Recall that a space $X$ is semi-Lindel\"{o}f if every semi-open
cover has a countable subcover.

It is clear that each semi-Menger space is semi-Lindel\"{o}f.

\begin{definition}(\cite{Kun})
A Hausdorff space $X$ is called a {\it Luzin space} ({\it in the
sense of Kunen})  if

(a) Every nowhere dense set in $X$ is countable;

(b) $X$ has at most countably many isolated points;

(c) $X$ is uncountable.
\end{definition}

By Corollary 2.5 in \cite{GJR}, if $X$ is an uncountable Hausdorff
space then $X$ is semi-Lindel\"{o}ff if and only if $X$ is a Luzin
space in the sense of Kunen.

\medskip

Let us note however that Kunen (Theorem 0.0. in \cite{Kun}) has
shown that under {\it Suslin's Hypothesis} (${\bf SH}$) there are
no Lusin spaces at all. K.Kunen proved that under $ {\bf
MA}(\aleph_1,\aleph_0$-centred$)$ there is a Lusin space if and
only if there is a Suslin line.

Since a Lusin space $X$ is hereditarily Lindel\"{o}f and
Hausdorff, it has cardinality at most $\mathfrak{c}=2^{\omega}$
(de Groot, \cite{Gro}).

Thus, further, we consider only hereditarily Lindel\"{o}f
Hausdorff spaces and assume that ${\bf SH}$ not hold.

\medskip

A space $X$ is an extremally disconnected space \cite{H10} if
closure of every open set is open. It is also known that in an
extremally disconnected space $X$, the collection of semi-open
sets is a topology on $X$.

\begin{theorem}
Let $X$ be an extremally disconnected space satisfying $S_{fin}(s\mathcal{O},s\mathcal{O})$ space. Then Alice does not have a winning strategy in the game $G_{fin}(s\mathcal{O},s\mathcal{O})$.
\end{theorem}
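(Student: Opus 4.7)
The plan is to exploit the extremal disconnectedness of $X$ to reduce the statement to the classical Hurewicz theorem for the Menger game. Since $X$ is extremally disconnected, the family of semi-open sets is closed under finite intersection (and always under arbitrary union), so it is a topology $\tau_s$ on the underlying set, finer than the original one. Write $X_s = (X, \tau_s)$. Then a semi-open cover of $X$ is literally a $\tau_s$-open cover of $X_s$; consequently $S_{fin}(s\mathcal{O}, s\mathcal{O})$ on $X$ is the Menger property on $X_s$, and $G_{fin}(s\mathcal{O}, s\mathcal{O})$ on $X$ is the ordinary Menger game on $X_s$. Applying the Hurewicz/Scheepers theorem to $X_s$ would transfer the conclusion immediately.

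Since the paper develops the machinery of tail covers, however, the expected proof is a direct imitation of the Hurewicz argument. I would assume Alice has a winning strategy $\sigma$ and build the tree $T = \omega^{<\omega}$ of possible Bob-histories: put $\mathcal{U}_\emptyset = \sigma(\emptyset)$ and enumerate its finite subfamilies as $\{F^{\emptyset}_n : n \in \omega\}$; then recursively, for each $s = (n_0, \dots, n_{k-1}) \in T$, let $\mathcal{U}_s = \sigma(F^{\emptyset}_{n_0}, F^{(n_0)}_{n_1}, \dots, F^{(n_0,\dots,n_{k-2})}_{n_{k-1}})$ and enumerate its finite subfamilies as $\{F^s_n : n \in \omega\}$. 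At level $k$ of $T$ there are only countably many semi-open covers $\{\mathcal{U}_s : |s|=k\}$; consolidate them into a single semi-open cover $\mathcal{W}_k$, where extremal disconnectedness is used to keep intersections and finite modifications semi-open.

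Apply $S_{fin}(s\mathcal{O}, s\mathcal{O})$ to the sequence $\langle \mathcal{W}_k : k \in \omega\rangle$ to obtain finite $\mathcal{V}_k \subseteq \mathcal{W}_k$ with $\bigcup_{k} \mathcal{V}_k = X$. The tail cover notion enters here: arrange the consolidation so that the resulting selections form a tail (semi) cover, meaning each $x \in X$ eventually lies in every $\mathcal{V}_k$ from some stage onwards. This is the device that ``eliminates the necessity to track the history of the game,'' because the tail property lets us forget what Bob did in earlier innings when checking coverage in later ones. Using this, extract a single branch $(n_0, n_1, \dots)$ of $T$ along which the finite selections induced by the $\mathcal{V}_k$'s still cover $X$; these are exactly Bob's moves in a legitimate play against $\sigma$, and that play defeats $\sigma$.

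The main obstacle is the branch-extraction step: the $\mathcal{V}_k$'s naturally reference many nodes of $T$ at level $k$, whereas a legal Bob-play follows only one. Getting a single branch whose coordinate-wise finite selections still cover $X$ is where the tail cover definition does its real work, by replacing a K\"onig-style compactness argument with the statement that each point is captured by \emph{almost all} levels, so the diagonal choice is automatically successful. The extremal disconnectedness hypothesis is needed precisely to ensure that the intermediate semi-open covers constructed during the consolidation step (intersections, common refinements, and level-wise consolidations) remain semi-open, so that $S_{fin}(s\mathcal{O}, s\mathcal{O})$ is applicable at every stage.
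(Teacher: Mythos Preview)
Your first paragraph is correct and is a genuinely different (and cleaner) route than the paper's. Since in an extremally disconnected space the semi-open sets form a topology $\tau_s$, the pair $(S_{fin}(s\mathcal{O},s\mathcal{O}),\,G_{fin}(s\mathcal{O},s\mathcal{O}))$ on $X$ is literally the pair $(S_{fin}(\mathcal{O},\mathcal{O}),\,G_{fin}(\mathcal{O},\mathcal{O}))$ on $X_s$, and the classical Hurewicz theorem (which needs no separation axioms beyond Lindel\"ofness, already implied by Menger) finishes the job. The paper instead re-runs the Hurewicz argument inside the semi-open setting, using extremal disconnectedness only to guarantee that the auxiliary covers built along the way remain semi-open. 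Your reduction buys brevity and makes transparent that nothing specifically ``semi'' is happening; the paper's approach has the virtue of being self-contained.

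Your second sketch is close to what the paper does but mis-describes the role of tail covers. You write that the tail property says ``each point is captured by almost all levels''; that is a $\gamma$-cover condition, not the tail-cover condition. In the paper, $\mathcal{V}_n=\bigcup_{|s|=n}\mathcal{U}_s$ is shown (by induction, using that each $\mathcal{U}_s$ is increasing and has $U_s$ as first element) to be a \emph{tail} cover: the family of intersections of its cofinite subfamilies is itself a semi-open cover $\mathcal{V}_n'$. One applies $S_{fin}(s\mathcal{O},s\mathcal{O})$ to $\langle\mathcal{V}_n'\rangle$ and obtains, for each $n$, a single set $H_n=\bigcap\mathcal{W}_n$ with $\mathcal{W}_n\subseteq\mathcal{V}_n$ cofinite and $X=\bigcup_n H_n$. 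The branch extraction is then immediate and does not require any diagonalisation or K\"onig-type argument: in inning $n$ Alice's move is some \emph{infinite} $\mathcal{G}_n\subseteq\mathcal{V}_n$ (one of the $\mathcal{U}_s$ with $|s|=n$), so $\mathcal{G}_n\cap\mathcal{W}_n\neq\emptyset$; Bob picks any $V_n$ in this intersection, and $H_n\subseteq V_n$ gives $X=\bigcup_n V_n$. The point is not that each $x$ lies in almost all $\mathcal{V}_k$, but that each chosen $H_n$ lies in all but finitely many members of $\mathcal{V}_n$, hence in whatever infinite branch Alice follows.
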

\begin{proof}
Let $\sigma$ be an arbitrary strategy for Alice in the game
$G_{fin}(s\mathcal{O},s\mathcal{O})$. If Bob covers the space
after finitely many steps in a play, then we are done. Otherwise
Bob cannot cover space after finitely many steps in each play.
Thus, we assume that, in no position, a finite selection suffices,
together with the earlier selections, to cover the space. Since
the space $X$ satisfies $S_{fin}(s\mathcal{O},s\mathcal{O})$, it
is semi-Lindel\"{o}f, we assume every semi-open cover of $X$ is
countable. By restricting Bob's moves to countable subcovers of
Alice's semi-open covers, we may assume that Alice's covers are
countable. So we can enumerate these semi-open covers as $\{U_1,
U_2,...\}$ and assume $\{U_1, \subseteq U_2,\subseteq...\}$, that
is, Alice's semi-open covers are increasing and Bob selects a
single set in each move from the increasing semi-open cover
$\{U_1, \subseteq U_2,\subseteq...\}$. Indeed, given a countable
semi-open cover $\{U_1, U_2,...\}$, we can restrict Bob's
selections to the form $\{U_1, \subseteq U_2,\subseteq...\}$, for
$n \in \omega$. Since Bob's goal is just to cover the space, we
may pretend that Bob is provided covers of the form $\{U_1, U_1
\cup U_2, U_1 \cup U_2 \cup U_3,...\}$ or making cover of space,
that is, Bob selects an element $U_1 \cup ... \cup U_n$, he
replies to Alice with the legal move $\{U_1, U_2,..., U_n\}$.
Finally, we assume that for each reply $\{U_1, U_2,...\}$(with
$U_1 \subseteq U_2 \subseteq ...$) of Alice's strategy to a move
$U$, we have $U = U_1$. Indeed, we can transform the given
semi-open cover into the semi-open cover $\{U, U \cup U_1, U \cup
U_2,...\}$. If Bob chooses $U$, we provide Alice with the answer
$U_1$, and if he chooses $U \cup U_n$, we provide Alice with the
answer $U_n$. Since Bob has already chosen the set $U$, its
addition in the new strategy does not help covering more points.
With these simplifications, Alice's strategy can be identified
with a tree of semi-open sets, as follows:
\begin{center}
Alice's initial move is a semi-open cover

$\sigma(<>) = \mathcal{U}_1 = \{U_1, U_2,...\}$;

Bob replies with $U_{k_1} = U_{\sigma(1)}$;

then Alice's move is an increasing cover

$\sigma(U_{\sigma(1)}) = \mathcal{U}_{\sigma(1)} = \{U_{\sigma(1),1}, U_{\sigma(1),2},...\}$;

Bob replies with $U_{\sigma(1),k_2} = U_{\sigma(1),\sigma(2)}$;

then Alice's move is an increasing cover

$\sigma(U_{\sigma(1),\sigma(2)}) = \mathcal{U}_{\sigma(1),\sigma(2)} = \{U_{\sigma(1),\sigma(2),1}, U_{\sigma(1),\sigma(2),2},...\}$;

Bob replies with $U_{\sigma(1),\sigma(2),k_3} = U_{\sigma(1),\sigma(2),\sigma(3)}$;

.

.

.

if Bob replies with $U_\sigma$, for $\sigma \in \mathbb{N}^m$,

then Alice's move is an increasing semi-open cover

$\sigma(U_{\sigma(1),\sigma(2),...,\sigma(m)}) = \mathcal{U}_{\sigma(1),\sigma(2),...,\sigma(m)} = \{U_{\sigma(1),\sigma(2),...,\sigma(m),1}, U_{\sigma(1),\sigma(2),...,\sigma(m),2},...\}$;

\end{center}

Now to show $\mathcal{V}_n = \bigcup_{\sigma \in \mathbb{N}^m} \mathcal{U}_\sigma$ is a tail semi-open cover of $X$.
We will show it by induction on $n$. The semi-open cover $\mathcal{V}_1 = \mathcal{U}_{<>}$ is increasing, and thus the set of cofinite intersections is again $\mathcal{V}_1$, an semi-open cover of $X$, that is, a s-tail cover of $X$.
Let $n$ be a natural number and assume it for $n$, that is, $\mathcal{V}_n$ is a tail semi-open cover of $X$. As
\begin{center}
$\mathcal{V}_n = \bigcup_{\sigma \in \mathbb{N}^m} \mathcal{U}_\sigma$

$= \bigcup_{\sigma(i) \in \omega} \mathcal{U}_{\sigma(1),\sigma(2),...,\sigma(m)}$,
\end{center}
that is, countable union of countable sets since each $\sigma(i)$ has countable infinite choices.

Since $\mathcal{V}_n$ is countable, we enumerate
\begin{center}
$\mathcal{V}_n = \{V_1, V_2,...,  V_n,...\}$

and

$\mathcal{V}_{n+1} = \{V_1 = V^1_1, V^1_2, V^1_3,...\}$ (such that $V^1_1 \subseteq V^1_2 \subseteq V^1_3 \subseteq ...$)

$\cup \{V_2 = V^2_1, V^2_2, V^2_3,...\}$ (such that $V^2_1 \subseteq V^2_2 \subseteq V^2_3 \subseteq ...$)

$\cup \{V_3 = V^3_1, V^3_2, V^3_3,...\}$ (such that $V^3_1 \subseteq V^3_2 \subseteq V^3_3 \subseteq ...$)

.

.

.

\end{center}
Now to show $\mathcal{V}_{n+1}$ is a tail semi-open cover of $X$.
Let $\mathcal{V} \subseteq \mathcal{V}_{n+1}$ is a cofinite subset. For each $k$, let $m_k$ be minimal natural number with $V^k_{m_k} \in \mathcal{V}$. Then $I = \{k \in \omega : m_k = 1\}$ is a cofinite subset of natural numbers since $\mathcal{V}$ is cofinite. Now
\begin{center}
$\cap \mathcal{V} = \cap_{k \in \omega} V^k_{m_k} = (\bigcap_{k \in I} V_k) \cap (\bigcap_{k \in \mathbb{N} \setminus I} V^k_{m_k})$.
\end{center}
The set $\bigcap_{k \in I} V_k$ is semi-open since $\mathcal{V}_n$ is a tail semi-open cover and so $\bigcap_{k \in I} V_k$ is a member of the family of intersections of cofinal segments of the semi-open cover $\mathcal{V}_n$. The set $\bigcap_{k \in \mathbb{N} \setminus I} V^k_{m_k}$ is semi-open since it is finite intersection of semi-open sets. So $\bigcap \mathcal{V}$ is a semi-open set.

Now to show $\bigcap \mathcal{V}$ covers $X$. Let $x \in X$. Since $\mathcal{V}_n$ is a tail semi-open cover, the set $I = \{k \in \omega : x \in V_k\}$ is cofinite. For each $k \in \mathbb{N} \setminus I$, let $m_k$ be minimal natural number such that $x \in V^k_{m_k}$. Then
\begin{center}
$x \in (\bigcap_{k \in I} V_k) \cap (\bigcap_{k \in \mathbb{N} \setminus I} V^k_{m_k})$,
\end{center}
the intersections of a cofinite subset of the family $\mathcal{V}_{n+1}$. So for each $n$, $\mathcal{V}_n$ is a tail semi-open cover of $X$.

Since $\mathcal{V}_n$ is a tail semi-open cover of $X$ for each $n$. Let $\mathcal{V}^{'}_n$ be the set of intersections of cofinite subsets of $\mathcal{V}_n$. Then $\mathcal{V}^{'}_n$ is a semi-open cover of $X$ for each $n$. Now by applying the property $S_{fin}(s\mathcal{O}, s\mathcal{O})$ to the sequence $\langle \mathcal{V}^{'}_n : n \in \omega \rangle$, Bob obtains $\mathcal{H}_n$, a finite subset of $\mathcal{V}^{'}_n$(an increasing cover) for each $n$ and $X = \bigcup_{n \in \omega} \bigcup \mathcal{H}_n$. Since $\mathcal{H}_n$ is finite subset of $\mathcal{V}^{'}_n$ and $\mathcal{V}^{'}_n$ is increasing cover, simply assume $\mathcal{H}_n$ is a singleton subset of $\mathcal{V}^{'}_n$. Then for $H \in \mathcal{H}_n$, $H$ is the intersection of cofinite subset say $\mathcal{W}_n$ of $\mathcal{V}_n$, that is, $H = \bigcap \mathcal{W}_n$ and
\begin{center}
$\bigcup \mathcal{H}_n = H = \bigcap \mathcal{W}_n$.
\end{center}
So $X = \bigcup_{n \in \omega} \bigcap \mathcal{W}_n$. Since $\mathcal{W}_n$ is a cofinite subset of $\mathcal{V}_n$ and in $nth$-inning, Alice provides an infinite subset $\mathcal{G}_n$(which is an increasing cover) of $\mathcal{V}_n$, Bob provides $V_n \in \mathcal{G}_n \cap \mathcal{W}_n$ and $\bigcap \mathcal{W}_n \subseteq V_n$. Then $X = \bigcup_{n \in \omega} V_n$ and Bob wins. This completes the proof.
\end{proof}

For semi-Rothberger game, we need a result slightly stronger than above Theorem. For that we first prove that semi-Menger property $S_{fin}(s\mathcal{O}, s\mathcal{O})$ is preserved by countable unions: Given a countable union of semi-Menger spaces, and a sequence of semi-open covers, we can split the sequence of covers into infinitely many disjoint subsequences, and use each subsequence to cover one of the given semi-Menger spaces.

\begin{theorem}
The semi-Menger property $S_{fin}(s\mathcal{O}, s\mathcal{O})$ is preserved by countable unions.
\end{theorem}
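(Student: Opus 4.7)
The plan is to execute the ``split the sequence of covers'' strategy explicitly flagged in the paragraph just before the theorem. Write $X = \bigcup_{k \in \omega} X_k$ with each $X_k$ satisfying $S_{fin}(s\mathcal{O}, s\mathcal{O})$, and let $\langle \mathcal{U}_n : n \in \omega \rangle$ be a sequence of semi-open covers of $X$. Fix a partition $\omega = \bigsqcup_{k \in \omega} A_k$ into pairwise disjoint infinite subsets; the subsequence $\langle \mathcal{U}_n : n \in A_k \rangle$ is dedicated to taking care of $X_k$.

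The key preliminary step is a subspace lemma: if $U \subseteq X$ is semi-open, then $U \cap X_k$ is semi-open in the subspace $X_k$. Using the characterization that $U$ is semi-open iff there is an open $V$ with $V \subseteq U \subseteq Cl_X(V)$, one picks $V \cap X_k$ as the candidate open set in $X_k$; the containment that must be verified is $U \cap X_k \subseteq Cl_{X_k}(V \cap X_k)$. This is the one genuinely delicate point: only $U \cap X_k \subseteq Cl_X(V) \cap X_k$ is automatic, and the identity $Cl_X(V) \cap X_k = Cl_{X_k}(V \cap X_k)$ can fail for a badly placed subspace. I expect the main obstacle to be securing this identity, either by leveraging the ambient hypotheses present in the paper (so that each $X_k$ behaves like an open or dense subspace of $X$) or by a more refined choice of the witness $V$. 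Once the lemma is in place, $\mathcal{U}_n|_{X_k} := \{U \cap X_k : U \in \mathcal{U}_n\}$ is a bona fide semi-open cover of $X_k$ for every $n$ and $k$.

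The remainder is the standard splitting--diagonalization. For each $k$, feed the reindexed sequence $\langle \mathcal{U}_n|_{X_k} : n \in A_k \rangle$ into the $S_{fin}(s\mathcal{O}, s\mathcal{O})$ property of $X_k$ to obtain, for each $n \in A_k$, a finite subfamily $\mathcal{W}_n^{(k)} \subseteq \mathcal{U}_n|_{X_k}$ with $X_k \subseteq \bigcup_{n \in A_k} \bigcup \mathcal{W}_n^{(k)}$. For each such $n$, lift the finitely many elements of $\mathcal{W}_n^{(k)}$ back to chosen preimages in $\mathcal{U}_n$, producing a finite $\mathcal{V}_n \subseteq \mathcal{U}_n$; since the $A_k$'s partition $\omega$, the set $\mathcal{V}_n$ is unambiguously defined for every $n$. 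Then
\[
\bigcup_{n \in \omega} \bigcup \mathcal{V}_n \;=\; \bigcup_{k \in \omega}\bigcup_{n \in A_k} \bigcup \mathcal{V}_n \;\supseteq\; \bigcup_{k \in \omega} X_k \;=\; X,
\]
which is the finite selection required to witness $S_{fin}(s\mathcal{O}, s\mathcal{O})$ for $X$. The diagonalization skeleton itself is entirely routine; concentrating effort on the subspace lemma is, as noted, the only real work.
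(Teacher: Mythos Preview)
Your diagonalization skeleton is correct and is essentially the argument in the paper, but two points of execution differ.

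First, the paper does not partition $\omega$ into disjoint infinite pieces; instead, for each $k$ it uses the overlapping tail $\{\,n : n \ge k\,\}$, obtains finite $\mathcal{V}_{n,k}\subseteq\mathcal{U}_n$ for $n\ge k$ with $X_k\subseteq\bigcup_{n\ge k}\bigcup\mathcal{V}_{n,k}$, and then sets $\mathcal{V}_n=\bigcup_{j\le n}\mathcal{V}_{n,j}$. Since this is a \emph{finite} union of finite families, each $\mathcal{V}_n$ is finite and $\bigcup_n\bigcup\mathcal{V}_n=X$. Your partition version and this tail version are interchangeable and equally valid.

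Second --- and this is the substantive difference --- the paper never restricts the covers to the subspaces $X_k$ and never invokes a ``semi-open sets restrict to semi-open sets'' lemma. It simply applies the semi-Menger property of $X_k$ directly to the ambient families $\mathcal{U}_n$ and selects $\mathcal{V}_{n,k}\subseteq\mathcal{U}_n$, thereby avoiding the lifting step altogether. Your worry about the identity $Cl_X(V)\cap X_k = Cl_{X_k}(V\cap X_k)$ is entirely justified: it fails for arbitrary subspaces, so the subspace lemma as you stated it cannot be proved in general. The paper sidesteps rather than resolves this issue; in effect it treats ``$X_k$ is semi-Menger'' relatively, i.e.\ with respect to covers by sets semi-open in $X$. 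For the paper's sole application (Theorem~\ref{4}, where $X\times\mathbb{N}=\bigcup_n X\times\{n\}$ and each summand is clopen), the distinction is harmless, but as a general lemma it is the same soft spot you identified --- just left unmentioned. So: drop the restriction-and-lift detour, select directly from $\mathcal{U}_n$ as the paper does, and your proof lines up with theirs.
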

\begin{proof}
Let $\{X_k : k \in \omega\}$ be a family of subspaces having semi-Menger property in a space $X$ and $\langle \mathcal{U}_n : n \in \omega \rangle$ be a sequence of semi-open covers of $X$. For each $k \in \omega$, consider the sequence $\langle \mathcal{U}_n : n \geq k \rangle$. For each $k \in \omega$, since $X_k$ is semi-Menger, there is a sequence $\langle\mathcal{V}_{n,k} : n \geq k \rangle$ such that for each $n \geq k$, $\mathcal{V}_{n,k}$ is a finite subset of $\langle \mathcal{U}_n : n \geq k \rangle$ and $\bigcup_{n \geq k} \mathcal{V}_{n,k} \supseteq X_k$. For each $n$, let
\begin{center}
$\mathcal{V}_n = \bigcup \{\mathcal{V}_{n,j} : j \leq n \}$.
\end{center}
Then each $\mathcal{V}_n$ is a finite subset of $\mathcal{U}_n$. Then for each $x \in X$, $x \in \bigcup X_k$. There exists $k \in \omega$ such that $x \in X_k$. Thus, $\bigcup_{n \geq k} \mathcal{V}_{n,k} \supseteq X_k$. Then $\bigcup_{n \geq k} \mathcal{V}_{n,k} \subseteq \bigcup_{n \geq k} \mathcal{V}_n$ and hence $x \in \bigcup \mathcal{V}_n$. This completes the proof.
\end{proof}

\begin{theorem} \label{4}
Let $X$ be a space satisfying $S_{fin}(s\mathcal{O}, s\mathcal{O})$. For each strategy for
Alice in the game $G_{fin}(s\mathcal{O}, s\mathcal{O})$, there is a play according to this strategy,
\begin{center}
$(\mathcal{U}_1,\mathcal{V}_1,\mathcal{U}_2,\mathcal{V}_2,...)$,
\end{center}
such that for each point $x \in X$ we have $x \in \bigcup \mathcal{V}_n$ for infinitely many $n$.
\end{theorem}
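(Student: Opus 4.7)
The plan is to rerun the strategy-tree construction from the first theorem of this section verbatim, but instead of applying $S_{fin}(s\mathcal{O}, s\mathcal{O})$ just once at the end, to apply it $\omega$ many times to shifted tail subsequences of the constructed covers, in the spirit of the countable-union theorem just proved. Combining the resulting selections then forces each point of $X$ to be covered during infinitely many innings.

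First I would carry out the reduction of the first theorem: by semi-Lindel\"{o}fness and the standard simplifications (countable increasing covers, single-set Bob moves), identify Alice's strategy $\sigma$ with a countably branching tree of semi-open sets, so that for each $n$ the level union $\mathcal{T}_n = \bigcup_{\tau \in \mathbb{N}^{n-1}} \mathcal{U}_\tau$ is a tail semi-open cover of $X$; let $\mathcal{T}'_n$ denote the associated semi-open cover of $X$ consisting of intersections of cofinite subfamilies of $\mathcal{T}_n$. Now for each $k \in \omega$, apply $S_{fin}(s\mathcal{O}, s\mathcal{O})$ to the shifted sequence $\langle \mathcal{T}'_n : n \geq k \rangle$ to obtain finite families $\mathcal{H}_{n,k} \subseteq \mathcal{T}'_n$ for $n \geq k$ satisfying $\bigcup_{n \geq k} \bigcup \mathcal{H}_{n,k} = X$, and set $\mathcal{H}_n = \bigcup\{\mathcal{H}_{n,k} : k \leq n\}$, a finite subfamily of $\mathcal{T}'_n$. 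Then for every $x \in X$ and every $k \in \omega$ there is some $n \geq k$ with $x \in \bigcup \mathcal{H}_{n,k} \subseteq \bigcup \mathcal{H}_n$, so $x$ lies in $\bigcup \mathcal{H}_n$ for infinitely many $n$.

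Finally I would unravel $\mathcal{H}_n$ into legal Bob moves exactly as in the first theorem: each member of $\mathcal{H}_n$ is $\bigcap \mathcal{W}_n^j$ for a cofinite $\mathcal{W}_n^j \subseteq \mathcal{T}_n$, and along the play being constructed Alice's $n$-th response is some infinite $\mathcal{G}_n \subseteq \mathcal{T}_n$ (determined by Bob's prior moves); since $\mathcal{G}_n \cap \mathcal{W}_n^j$ is nonempty for every $j$, Bob can choose one $V_n^j$ from each, and the finite set $\mathcal{V}_n := \{V_n^j\}_j$ is a legal Bob move satisfying $\bigcup \mathcal{H}_n \subseteq \bigcup \mathcal{V}_n$. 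Hence every $x$ belongs to $\bigcup \mathcal{V}_n$ for infinitely many $n$. The step I expect to require most care is this unraveling, namely verifying that Bob's moves $\mathcal{V}_1, \ldots, \mathcal{V}_n$ consistently pick out a single branch through the strategy tree so that the identification $\mathcal{G}_{n+1} = \mathcal{U}_\tau$ is unambiguous; this is precisely the point of the increasing-cover simplification inherited from the first theorem, so no fundamentally new ingredient beyond the countable-union idea is needed.
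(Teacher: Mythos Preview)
Your proposal is correct, but it takes a genuinely different route from the paper's own argument. The paper does not reopen the strategy-tree construction at all; instead it passes to the product space $X \times \mathbb{N}$, observes that this is a countable union of copies of $X$ and hence is still semi-Menger (by the preceding countable-union theorem), lifts Alice's strategy on $X$ to a strategy on $X \times \mathbb{N}$ by replacing each cover $\mathcal{U}$ with $\{U \times \{n\} : U \in \mathcal{U},\, n \in \omega\}$, and then applies the first theorem of the section as a black box to defeat this lifted strategy. Projecting the resulting play back to $X$, each point $x$ must be covered in infinitely many innings because every $(x,k)$ is covered somewhere while each of Bob's finite moves touches only finitely many $\mathbb{N}$-slices. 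Your approach trades this product-space trick for a direct iterated application of $S_{fin}(s\mathcal{O},s\mathcal{O})$ to the shifted tails $\langle \mathcal{T}'_n : n \geq k\rangle$, which is exactly the internal mechanism behind the countable-union theorem. The paper's route is more modular (it invokes the first theorem once, on a single auxiliary space) and sidesteps having to redo the tree/tail-cover induction; your route avoids introducing a product space and verifying that the relevant hypotheses transfer to it, at the cost of repeating the construction. Regarding your stated worry in the unraveling step: since $\bigcap_j \mathcal{W}_n^j$ is still cofinite in $\mathcal{T}_n$, Bob can in fact choose a \emph{single} element $V_n \in \mathcal{G}_n \cap \bigcap_j \mathcal{W}_n^j$, which already contains $\bigcup \mathcal{H}_n$; this keeps Bob on a single branch of the simplified strategy tree and makes the identification of $\mathcal{G}_{n+1}$ unambiguous, so no extra care is needed beyond what the first theorem already supplies.
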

\begin{proof}
The product space $X \times \mathbb{N}$, a countable union of semi-Menger spaces, satisfies $S_{fin}(s\mathcal{O}, s\mathcal{O})$. We define a strategy for Alice in the game $G_{fin}(s\mathcal{O}, s\mathcal{O})$, played on the space $X \times \mathbb{N}$. Let $U$ be Alice's first move in the original game. Then, in the new game, her first move is
\begin{center}
$\mathcal{U}^{'} = \{U \times \{n\} : U \in \mathcal{U}, n \in \omega\}$.
\end{center}
If Bob selects a finite set $\mathcal{V}^{'} \subseteq \mathcal{U}^{'}$, we take the set
\begin{center}
$\mathcal{V} = \{U \in \mathcal{U} : $there is $n$ with $U \times \{n\} \in \mathcal{V}^{'}\}$
\end{center}
as a move in the original game. Then Alice replies with a cover $\mathcal{V}$, and we continue in the same manner. By above Theorem, there is a play
\begin{center}
$(\mathcal{U}^{'}_1,\mathcal{V}^{'}_1,\mathcal{U}^{'}_2,\mathcal{V}^{'}_2,...)$
\end{center}
in the new game, with $\bigcup_{n \in \omega} \mathcal{V}^{'}_n$ a cover of $X \times \mathbb{N}$. Consider the corresponding play in the original strategy,
\begin{center}
$(\mathcal{U}_1,\mathcal{V}_1,\mathcal{U}_2,\mathcal{V}_2,...)$.
\end{center}
Let $x \in X$. There is a natural number $n_1$ with $(x,1) \in \bigcup \mathcal{V}^{'}_{n_1}$. Then $x \in \bigcup \mathcal{V}_{n_1}$. The set
\begin{center}
$V = \{k \in \omega :$ there is $U$ with $U \times \{k\} \in \bigcup_{i \leq {n_1}} \mathcal{V}^{'}_i\}$
\end{center}
is finite. Let $m$ be a natural number greater than all elements of the set $V$. There is a natural number $n_2$ with $(x,m) \in \bigcup \mathcal{V}^{'}_{n_2}$. Then $x \in \bigcup \mathcal{V}^{'}_{n_2}$, and $n_1 < n_2$. Continuing in a similar manner, we see that $x \in \bigcup \mathcal{V}_n$ for infinitely many $n$.
\end{proof}

\section{The Semi-Rothberger Game}

The definitions of semi-Rothberger property $S_1(s\mathcal{O}, s\mathcal{O})$ and the corresponding game $G_1(s\mathcal{O}, s\mathcal{O})$ are similar to those of $S_{fin}(s\mathcal{O}, s\mathcal{O})$ and $G_{fin}(s\mathcal{O}, s\mathcal{O})$, respectively, but here we select one element from each cover. Here too, if Alice does not have a winning strategy then the space satisfies $S_1(s\mathcal{O}, s\mathcal{O})$. The converse implication will be proved in the next results.

For that we need the following lemma.

\begin{lemma} \label{6}
Let $X$ be an extremally disconnected space satisfying $S_1(s\mathcal{O},s\mathcal{O})$. Let $\mathcal{V}_1, \mathcal{V}_2,...$ be nonempty finite families of semi-open sets such that, for each point $x \in X$, we have $x \in \bigcup \mathcal{V}_n$ for infinitely many $n$. Then there are elements $U_1 \in \mathcal{V}_1, U_2 \in \mathcal{V}_2,...$ such that the family $\{U_1, U_2,...\}$ covers the space $X$.
\end{lemma}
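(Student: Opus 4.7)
\noindent\textbf{Proof plan for Lemma~\ref{6}.}

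My plan is to reduce the statement to an application of $S_1(s\mathcal{O}, s\mathcal{O})$ to a carefully constructed sequence of semi-open covers built from the families $\mathcal{V}_n$, followed by an extraction step that turns the $S_1$-selection into a one-per-family selection. Because $X$ is extremally disconnected, the semi-open sets form a topology on $X$, so arbitrary unions of semi-open sets are semi-open and all constructions below remain within the semi-open calculus.

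Enumerate $\mathcal{V}_n = \{V_n^1, \ldots, V_n^{k_n}\}$ with $k_n = |\mathcal{V}_n| < \infty$, and for each $n \in \omega$ set
\[
\mathcal{W}_n \;=\; \{V_m^i : m \geq n,\ 1 \leq i \leq k_m\}.
\]
For every $x \in X$, the infinitely-often hypothesis produces some $m \geq n$ with $x \in V_m^i$ for some $i$, so $\mathcal{W}_n$ is a semi-open cover of $X$. Applying $S_1(s\mathcal{O}, s\mathcal{O})$ to $\langle \mathcal{W}_n : n \in \omega \rangle$, I obtain selections $W_n = V_{m_n}^{i_n} \in \mathcal{W}_n$ with $m_n \geq n$ and $\bigcup_n V_{m_n}^{i_n} = X$.

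The delicate step is to convert the resulting list $\{(m_n, i_n) : n \in \omega\}$ into a well-defined function $f : \omega \to \omega$ with $f(m) \leq k_m$ so that, setting $U_m := V_m^{f(m)} \in \mathcal{V}_m$, the family $\{U_m\}$ covers $X$. The naive assignment $f(m_n) := i_n$ is ill-defined when the $m_n$'s repeat, and choosing an arbitrary representative can lose witnesses for some $x \in X$. I expect to overcome this by refining the construction so that the $S_1$-output produces pairwise distinct witnesses. The natural route is to pass to the product $X \times \mathbb{N}$: the semi-Rothberger property $S_1(s\mathcal{O}, s\mathcal{O})$ is preserved under countable unions (the Rothberger analogue of the preservation lemma proved just before Theorem~\ref{4}), so $X \times \mathbb{N}$ again satisfies $S_1(s\mathcal{O}, s\mathcal{O})$. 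Applying $S_1$ to a sequence of semi-open covers of $X \times \mathbb{N}$ whose typical member has the form $V_m^i \times \{m\}$ forces the selected witness indices to be distinct across innings via the $\mathbb{N}$-coordinate, which allows an unambiguous read-off of $f$ and hence of the desired family $\{U_n\}$.

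The main obstacle is precisely this extraction step: converting a many-to-one covering selection into a one-per-family choice in a way compatible with the finiteness of each $\mathcal{V}_n$ and the weaker "infinitely-often" coverage hypothesis. Extremal disconnectedness is used throughout to remain inside the semi-open calculus when taking unions, and the countable-union preservation of $S_1(s\mathcal{O}, s\mathcal{O})$ supplies the product space on which injective routing of the witnesses can be arranged.
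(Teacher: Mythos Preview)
Your plan has a genuine gap at the extraction step, and the proposed product-space fix does not work as stated. If a ``typical member'' of your covers of $X\times\mathbb{N}$ is $V_m^i\times\{m\}$, then covering a point $(x,k)$ forces $x\in\bigcup\mathcal{V}_k$; but the hypothesis only says $x\in\bigcup\mathcal{V}_n$ for \emph{infinitely many} $n$, not for all $n$, so in general $\{V_m^i\times\{m\}:m,i\}$ is not a cover of $X\times\mathbb{N}$ at all, and no sequence of covers built from such sets can be fed into $S_1(s\mathcal{O},s\mathcal{O})$ on $X\times\mathbb{N}$. Even setting this aside, tagging by $m$ in the second coordinate does not force the selected family-indices to be injective across innings: $S_1$ only asks that the chosen singletons collectively cover, and nothing prevents two different innings from selecting sets with the same $\mathbb{N}$-coordinate. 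Your earlier direct route via the tail covers $\mathcal{W}_n$ runs into the same wall: from $m_n\ge n$ you only get that each value of $m$ is hit finitely often, which still leaves you with finitely many competing choices $V_m^{i_n}$ at a single index $m$, and discarding all but one can lose coverage.

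The paper's argument supplies exactly the missing idea: take as the $n$-th cover the family $\mathcal{U}_n$ of all intersections $V_{j_1}\cap\cdots\cap V_{j_n}$ with the factors chosen from $n$ \emph{distinct} families $\mathcal{V}_{j_1},\ldots,\mathcal{V}_{j_n}$. The infinitely-often hypothesis makes each $\mathcal{U}_n$ a cover, and extremal disconnectedness (semi-open sets form a topology) makes these finite intersections semi-open. After applying $S_1$ to $\langle\mathcal{U}_n\rangle$ and obtaining $V_n\in\mathcal{U}_n$, the pigeonhole principle lets you greedily assign to each $V_n$ a family index not yet used (since $V_n$ comes from $n$ distinct families and only $n-1$ indices have been consumed), and replace $V_n$ by its factor from that family; the resulting one-per-family selection still covers because each $V_n$ is contained in every one of its factors. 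Two side remarks: arbitrary unions of semi-open sets are semi-open in any space, so extremal disconnectedness is needed precisely for these finite \emph{intersections}, not for the unions you mention; and you do not need the countable-union preservation of $S_1$ here at all.
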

\begin{proof}
For each $n$, let $\mathcal{U}_n$ be the family of all intersections of $n$ semi-open sets taken from distinct members of the sequence $\mathcal{V}_1, \mathcal{V}_2,...$. Then $\mathcal{U}_n$ is a semi-open cover of $X$ for each $n$. Since $X$ satisfies $S_1(s\mathcal{O},s\mathcal{O})$, there are semi-open sets $V_1 \in \mathcal{U}_1, V_2 \in \mathcal{U}_2,...$ such that $\{V_1, V_2,...\}$ covers the space $X$. As $V_1 \in \mathcal{U}_1$, $V_1$ is a member of $\mathcal{V}_n$ for some $n$. For $V_2 \in \mathcal{U}_2$, $V_2$ is the intersection of two semi-open sets from distinct $\mathcal{V}_i$. We can extend $V_2$ to an element of some other family $\mathcal{V}_m$ and so on. From this process we obtain a selection of at most one element from each family $\mathcal{V}_i$ that covers $X$. So we can extend our selection to have an element from each family $\mathcal{V}_i$.
\end{proof}

For a natural number $k$ and families of sets $\mathcal{U}_1,...,\mathcal{U}_k$, let
\begin{center}
$\mathcal{U}_1 \wedge \mathcal{U}_2 \wedge ... \wedge \mathcal{U}_k = \{U_1 \cap U_2 \cap ... \cap U_k : U_1 \in \mathcal{U}_1,...,U_k \in \mathcal{U}_k\}$.
\end{center}

\begin{theorem}
Let $X$ be an extremally disconnected space satisfying $S_1(s\mathcal{O}, s\mathcal{O})$. Then Alice does not have a winning strategy in the game $G_1(s\mathcal{O}, s\mathcal{O})$.
\end{theorem}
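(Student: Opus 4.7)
The plan is to reduce to Theorem \ref{4} and Lemma \ref{6} by converting the given $G_1$-strategy $\sigma$ to a $G_{fin}$-strategy $\bar\sigma$ via wedges of $\sigma$-responses, and then extracting a coherent $G_1$-play from Bob's $G_{fin}$-picks. Note that $S_1(s\mathcal{O},s\mathcal{O})$ trivially implies $S_{fin}(s\mathcal{O},s\mathcal{O})$, so Theorem \ref{4} is available.

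For the construction of $\bar\sigma$, I maintain, inductively in the round $n$, a finite rooted tree $T_n$ whose depth-$(n-1)$ leaves are partial $G_1$-histories of length $n-1$ consistent with $\sigma$. Start with $T_1$ being just the root (the empty history). At round $n$, if the leaves of $T_n$ are $P_1,\dots,P_m$, Alice plays
\[
\bar\sigma(s_1,\dots,s_{n-1})\;=\;\sigma(P_1)\wedge\sigma(P_2)\wedge\cdots\wedge\sigma(P_m).
\]
Because $X$ is extremally disconnected, the family of semi-open sets is a topology, so this finite wedge is a family of semi-open sets; it is trivially a cover. Each of its elements has a canonical decomposition $W=\bigcap_{i=1}^m U^W_{P_i}$ with $U^W_{P_i}\in\sigma(P_i)$. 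After Bob picks a finite $s_n\subseteq\bar\sigma(s_1,\dots,s_{n-1})$, form $T_{n+1}$ by attaching to each leaf $P_i$ of $T_n$ the children $\{U^W_{P_i}:W\in s_n\}$; this keeps $T_{n+1}$ finite, and its depth-$n$ leaves are exactly length-$n$ $G_1$-histories $(V_1,\dots,V_n)$ consistent with $\sigma$ arising from this bookkeeping.

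Next I apply Theorem \ref{4} to $\bar\sigma$ to obtain a $G_{fin}$-play $(\bar{\mathcal U}_n,s_n)_n$ conforming to $\bar\sigma$ such that each $x\in X$ lies in $\bigcup s_n$ for infinitely many $n$. Lemma \ref{6} then produces $W_n\in s_n$ with $\{W_n:n\in\omega\}$ covering $X$. I now extract a $G_1$-play recursively: set $V_1=W_1\in s_1\subseteq\sigma(\emptyset)$, and given that $(V_1,\dots,V_{n-1})$ is a leaf of $T_n$, put $V_n=U^{W_n}_{(V_1,\dots,V_{n-1})}\in\sigma(V_1,\dots,V_{n-1})$; by construction $(V_1,\dots,V_n)$ is then a leaf of $T_{n+1}$, so the recursion continues forever. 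Since $W_n\subseteq V_n$ for every $n$, the sequence $\{V_n:n\in\omega\}$ covers $X$, yielding a play conforming to $\sigma$ that Bob wins, contradicting the assumption that $\sigma$ is winning.

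The principal subtlety is the bookkeeping that attaches enough structure to a $G_{fin}$-play for a coherent $G_1$-play to be reconstructible: the wedge representation canonically labels each Bob-pick by the current tree's leaves, so a single threaded path through the growing trees converts the $W_n$'s into genuine $\sigma$-responses $V_n$ with $W_n\subseteq V_n$. Extremal disconnectedness is used essentially here to keep finite wedges of semi-open covers inside $s\mathcal{O}$, so that $\bar\sigma$ is indeed an $s\mathcal{O}$-valued strategy for $G_{fin}(s\mathcal{O},s\mathcal{O})$.
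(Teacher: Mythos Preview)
Your proof is correct and follows essentially the same approach as the paper: you convert Alice's $G_1$-strategy into a $G_{fin}$-strategy by taking wedges of $\sigma$-responses, apply Theorem~\ref{4} and Lemma~\ref{6}, and then thread a coherent $G_1$-play through the bookkeeping structure with $W_n\subseteq V_n$. The only cosmetic difference is that the paper first passes to countable covers and indexes the wedge over a ``box'' $\{\tau\leq (m_1,\dots,m_n)\}$, whereas you grow a finite tree $T_n$ directly from Bob's actual picks; both devices serve the same purpose and the extraction argument is identical.
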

\begin{proof}
Fix an arbitrary strategy for Alice in the semi-Rothberger game
$G_1(s\mathcal{O}, s\mathcal{O})$. Since $S_1(s\mathcal{O},
s\mathcal{O})$ spaces are semi-Lindel\"{o}f, we may assume that
each semi-open cover in the strategy is countable. Let
$\mathbb{N}^{< \infty}$ be the set of finite sequences of natural
numbers. We index the semi-open covers in the strategy as
\begin{center}
$\mathcal{U}_\sigma = \{U_{\sigma,1}, U_{\sigma,2},...\}$,
\end{center}
for $\sigma \in \mathbb{N}^{< \infty}$, so that $\mathcal{U} = \{U_1, U_2,...\}$ is Alice's first move, and for each finite sequence $k_1, k_2,...,k_n$ of natural numbers, $\mathcal{U}_{k_1,k_2,k_3,...,k_n}$ is Alice's reply to the position
\begin{center}
$(\mathcal{U}, U_{k_1}, \mathcal{U}_{k_1}, U_{k_1,k_2}, \mathcal{U}_{k_1,k_2},...,U_{k_1,k_2,...,k_n})$.
\end{center}
For finite sequences $\tau,\sigma \in \mathbb{N}^n$, we write $\tau \leq \sigma$ if $\tau(i) \leq \sigma(i)$ for all $i = 1,2,...,n$. We define a strategy for Alice in the semi-Menger game $G_{fin}(s\mathcal{O}, s\mathcal{O})$. Alice's first move is $\mathcal{U}$, her first move in the original strategy. Assume that Bob selects a finite subset $\mathcal{F}$ of $\mathcal{U}$. Let $m_1$ be the minimal natural number with $\mathcal{F} \subseteq \{U_1, U_2,...,U_{m_1}\}$. Then, in the semi-Menger game, Alice's response is the joint refinement $\mathcal{U}_1 \wedge \mathcal{U}_2 \wedge ... \wedge \mathcal{U}_{m_1}$. Assume that Bob chooses a finite subset $\mathcal{F}$ of this refinement. Let $m_2$ be the minimal natural number such that $\mathcal{F}$ refines all sets $\{U_{i,1},U_{i,2},...,U_{i,m_2}\}$, for $i = 1,2,...,m_1$. Then Alice's reply is the joint refinement $\bigwedge_{\tau \leq (m_1,m_2)} \mathcal{U}_{\tau}$.
In general, Alice provides a semi-open cover of the form $\bigwedge_{\tau \leq \sigma} \mathcal{U}_{\tau}$ for $\sigma \in \mathbb{N}^{< \infty}$, Bob selects a finite family refining all families $\{U_{\tau,1},U_{\tau,2},...,U_{\tau,m}\}$ for $\tau \leq \sigma$, with the minimal natural number $m$, and Alice replies $\bigwedge_{\tau \leq (\sigma,m)} \mathcal{U}_{\tau}$.

Now by Theorem \ref{4}, there is a play
\begin{center}
$(\mathcal{U}, \mathcal{F}_1, \bigwedge_{k_1 \leq m_1} \mathcal{U}_{k_1}, \mathcal{F}_2, \bigwedge_{(k_1,k_2) \leq (m_1,m_2)} \mathcal{U}_{k_1,k_2},...)$,
\end{center}
according to the new strategy, such that every point of the space is covered infinitely in the sequence $\bigcup \mathcal{F}_1, \bigcup \mathcal{F}_2,...$. By Lemma \ref{6}, we can pick one element from each set $\mathcal{F}_n$ and cover the space. There is $k_1 \leq m_1$ such that the first picked element is a subset of $U_{k_1}$. There is $k_2 \leq m_2$ such that the second picked element is a subset of $U_{k_1,k_2}$, and so on. Then the play
\begin{center}
$(\mathcal{U}, U_{k_1}, \mathcal{U}_{k_1}, U_{k_1,k_2},...)$
\end{center}
is in accordance with Alice's strategy in the semi-Rothberger game, and is won by Bob.
\end{proof}

\end{document}